\begin{document}

%
%
%
 \newtheorem{thm}{Theorem}[section]
 \newtheorem{cor}[thm]{Corollary}
 \newtheorem{lem}[thm]{Lemma}
 \newtheorem{prop}[thm]{Proposition}
 \newtheorem{defn}[thm]{Definition}
 \newtheorem{assumption}[thm]{Assumption}
 \newtheorem{rem}[thm]{Remark}
 \newtheorem{ex}{Example}
\numberwithin{equation}{section}
\def\e{{\rm e}}
\def\bx{\mathbf{x}}
\def\bsmalpha{{\small \mathbf{\alpha}}}
\def\btheta{\mathbf{\theta}}
\def\balpha{\mathbf{\alpha}}
\def\by{\mathbf{y}}
\def\bz{\mathbf{z}}
\def\F{\mathcal{F}}
\def\R{\mathbb{R}}
\def\T{\mathbf{T}}
\def\N{\mathbb{N}}
\def\K{\mathbf{K}}
\def\Q{\mathbf{Q}}
\def\M{\mathbf{M}}
\def\O{\mathbf{O}}
\def\C{\mathbb{C}}
\def\P{\mathbf{P}}
\def\Z{\mathbb{Z}}
\def\H{\mathcal{H}}
\def\A{\mathbf{A}}
\def\V{\mathbf{V}}
\def\AA{\overline{\mathbf{A}}}
\def\B{\mathbf{B}}
\def\c{\mathbf{C}}
\def\L{\mathbf{L}}
\def\bS{\mathbf{S}}
\def\H{\mathcal{H}}
\def\I{\mathbf{I}}
\def\Y{\mathbf{Y}}
\def\X{\mathbf{X}}
\def\f{\mathbf{f}}
\def\z{\mathbf{z}}
\def\d{\hat{d}}
\def\bx{\mathbf{x}}
\def\y{\mathbf{y}}
\def\w{\mathbf{w}}
\def\b{\mathbf{b}}
\def\a{\mathbf{a}}
\def\u{\mathbf{u}}
\def\s{\mathcal{S}}
\def\cc{\mathcal{C}}
\def\co{{\rm co}\,}
\def\vol{{\rm vol}\,}
\def\om{\mathbf{\Omega}}

\title{On representations of the feasible set in convex optimization}
\author{Jean B. Lasserre}
\address{LAAS-CNRS and Institute of Mathematics\\
University of Toulouse\\
LAAS, 7 avenue du Colonel Roche\\
31077 Toulouse C\'edex 4,France}
\email{lasserre@laas.fr}
\date{}

\begin{abstract}
We consider the convex optimization problem 
$\min_{\bx} \{f(\bx): g_j(\bx)\leq 0,\,j=1,\ldots,m\}$
where $f$ is convex, the feasible set $\K$ is convex and Slater's condition holds,
but the functions $g_j$'s are {\it not} necessarily convex. 
We show that for any representation of $\K$ that 
satisfies a mild nondegeneracy assumption,
every minimizer is a Karush-Kuhn-Tucker (KKT) point and conversely
every KKT point is a minimizer. That is, the KKT optimality conditions are necessary and sufficient
as in convex programming where one assumes that the $g_j$'s are convex.
So in convex optimization, and as far as one is concerned with KKT points,
what really matters is the geometry of $\K$ and not so much its representation.
\end{abstract}

\keywords{Convex programming; KKT conditions}

\subjclass{90C25 65K05}

\maketitle
\section{Introduction}~
Given differentiable functions $f,g_j :\R^n\to\R$, $j=1,\ldots,m$,
consider the following convex optimization problem:
\begin{equation}
\label{defpb}
 f^*\,:=\,\displaystyle\inf_{\bx}\:\{\:f(\bx)\::\:\bx\in\K\:\}
\end{equation}
where $f$ is convex and the feasible set $\K\subset\R^n$ is convex and represented in the form:
\begin{equation}
\label{setk}
\K\,=\,\{\:\bx\in\R^n\::\: g_j(\bx)\,\leq\,0,\:j=1,\ldots,m\:\}.
\end{equation}
Convex optimization usually refers to minimizing a convex function over a convex set 
without precising its representation (see e.g. Ben-Tal and Nemirovsky \cite[Definition 5.1.1]{bental}
or Bertsekas et al. \cite[Chapter 2]{bertsekas}), and
it is well-known that convexity of the function $f$ and of the set $\K$ imply that every local minimum is a global minimum. An elementary proof
only uses the geometry of $\K$, not its representation by the defining functions $g_j$; see e.g. Bertsekas et al. \cite[Prop. 2.1.2]{bertsekas}. 

The convex set $\K$ may be represented by different choices of 
the (not necessarily convex) defining functions $g_j$, $j=1,\ldots,m$.
For instance, the set
\[\K\,:=\,\{\bx\in\R^2\,:\,1-x_1x_2\leq 0; \:\bx\geq 0\}\]
is convex but the function $\bx\mapsto 1-x_1x_2$ is not convex on $\R^2_+$.
Of course, depending on the choice of the defining functions $(g_j)$, several properties may or may not hold. In particular, the celebrated Karush-Kuhn-Tucker (KKT)
optimality conditions {\it depend} on the representation of $\K$.
Recall that $\bx\in\K$ is a KKT point if 
\begin{equation}
\label{kkt}
\nabla f(\bx)+\sum_{j=1}^m\lambda_j\nabla g_j(\bx)=0\quad\mbox{and}\quad  \lambda_j\,g_j(\bx)=0,\:j=1,\ldots,m,\end{equation}
for some nonnegative vector $\lambda\in\R^m$. (More precisely $(\bx,\lambda)$ is a KKT point.)

{\it Convex programming} refers to
the situation where $f$ is convex {\em and} the defining functions $g_j$ of $\K$
are also convex. See for instance Ben-Tal and Nemirovsky \cite[p. 335]{bental},
Berkovitz \cite[p. 179]{berkovitz},
Boyd and Vandenberghe \cite[p. 7]{boyd}, Bertsekas et al. \cite[\S 3.5.5]{bertsekas},
Nesterov and Nemirovskii \cite[p. 217-218]{nesterov}, and Hiriart-Urruty \cite{jbhu}.

A crucial feature of convex programming
is that when Slater's condition holds\footnote{Slater's condition 
holds for $\K$ if for some $\bx_0\in\K$, $g_j(\bx_0)<0$ for every $j=1,\ldots,m$.},
the KKT optimality conditions (\ref{kkt}) are necessary and sufficient, which shows that 
a representation of the convex set $\K$ with convex functions $(g_j)$ 
has some very attractive features.

The purpose of this note is to show that in fact, when $\K$ is convex and as far as one is concerned
with KKT points, what really matters is the 
geometry of $\K$ and not so much its representation.
Indeed, we show that if $\K$ is convex and Slater's condition holds 
then the KKT optimality conditions (\ref{kkt}) are also necessary and sufficient
for {\it all} representations of $\K$ that satisfy a mild nondegeneracy condition,
no matter if the $g_j$'s are convex. So this attractive feature is not specific to representations of $\K$ with convex functions.

That a KKT point is a local (hence global) minimizer follows easily from 
the convexity of $\K$.
More delicate is the fact that any local (hence global) minimizer is a KKT point.
Various {\it constraint qualifications} are usually required to hold at a minimizer,
and when the $g_j$'s
are convex the simple Slater's condition is enough. Here we show that 
Slater's condition is also sufficient for all representations of $\K$
that satisfy a mild additional nondegeneracy assumption on the boundary 
of $\K$. Moreover under Slater's condition this mild 
nondegeneracy assumption is automatically satisfied if the $g_j$'s are convex.
 
\section{Main result}

Let $\K\subset\R^n$ be as in (\ref{setk}). We first start with the following non degeneracy assumption:
\begin{assumption}[nondegeneracy]
\label{ass1}
For every $j=1,\ldots,m$, 
\begin{equation}
\label{ass-1}
\nabla g_j(\bx)\neq 0,\qquad \mbox{whenever }\bx\in\K\quad\mbox{and}\quad g_j(\bx)=0.
\end{equation}
\end{assumption}
Observe that under Slater's condition, (\ref{ass-1}) 
is automatically satisfied
if $g_j$ is convex. Indeed if $g_j(\bx)=0$ and $\nabla g_j(\bx)=0$ then by convexity
$0$ is the global minimum of $g_j$ on $\R^n$. Hence
there is no $\bx_0\in\K$ with $g_j(\bx_0)<0$.
We next state the following characterization of convexity.
\begin{lem}
\label{lemma1}
With $\K\subset\R^n$ as in (\ref{setk}), let Assumption \ref{ass1} and Slater's condition both hold for $\K$. Then $\K$ is convex if and only if for every $j=1,\ldots,m$:
\begin{equation}
\label{lemma-1}
\langle \nabla g_j(\bx),\y-\bx\rangle \,\leq\,0,\quad\forall\, \bx,\y\in\K\quad\mbox{with}\quad g_j(\bx)=0.\end{equation}
\end{lem}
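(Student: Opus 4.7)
For the forward direction, I would argue directly. If $\K$ is convex and $\bx, \y \in \K$ with $g_j(\bx) = 0$, convexity keeps the segment $\bx + t(\y - \bx)$ inside $\K$ for $t \in [0, 1]$, so $g_j$ stays $\leq 0$ along it while equalling zero at $t=0$; the one-sided directional derivative at $t = 0$ then yields $\langle \nabla g_j(\bx), \y - \bx\rangle \leq 0$.

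The reverse direction is the substantive one; the plan is to realize $\K$ as an intersection of supporting half-spaces. For every pair $(\bar{\bx}, j)$ with $\bar{\bx} \in \K$ and $g_j(\bar{\bx}) = 0$, set
\[
H^-_{\bar{\bx}, j} \,:=\, \{\z \in \R^n : \langle \nabla g_j(\bar{\bx}), \z - \bar{\bx}\rangle \leq 0\}.
\]
Assumption \ref{ass1} makes each $H^-_{\bar{\bx}, j}$ a genuine half-space, and hypothesis (\ref{lemma-1}) gives $\K \subset H^-_{\bar{\bx}, j}$. Hence $\K$ is contained in the convex set $\K' := \bigcap_{(\bar{\bx}, j)} H^-_{\bar{\bx}, j}$, and the work reduces to the reverse inclusion $\K' \subset \K$.

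To prove $\K' \subset \K$, I would argue by contradiction. Assume some $\z \in \K' \setminus \K$ and fix a Slater point $\bx_0$; by continuity of the $g_j$'s, $\bx_0 \in \mathrm{int}(\K)$. Since $\bx_0$ is interior and $\z$ lies outside the closed set $\K$, the segment $[\bx_0, \z]$ first exits $\K$ at a point $\bx_1 = (1 - t^*)\bx_0 + t^* \z$ with $t^* \in (0, 1)$ and $\bx_1 \in \partial \K$; in particular $g_j(\bx_1) = 0$ for some $j$. A whole ball around $\bx_0$ sits in $\K \subset H^-_{\bx_1, j}$, so $\bx_0$ lies \emph{strictly} inside that half-space:
\[
\langle \nabla g_j(\bx_1), \bx_0 - \bx_1\rangle < 0,
\]
while $\z \in \K'$ only yields $\langle \nabla g_j(\bx_1), \z - \bx_1\rangle \leq 0$. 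Taking the inner product of $\nabla g_j(\bx_1)$ with the identity $(1 - t^*)(\bx_0 - \bx_1) + t^*(\z - \bx_1) = 0$ then produces a strictly positive-weight combination of a strictly negative and a non-positive number that must vanish --- contradiction.

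The main obstacle is precisely this last step: a plain separation argument would supply only the non-strict inequality $\langle \nabla g_j(\bx_1), \bx_0 - \bx_1\rangle \leq 0$, which is useless in the convex-combination identity. Extracting strict negativity requires genuine interiority of the Slater point --- so both Slater's condition and continuity of the $g_j$'s are essential --- while nondegeneracy (Assumption \ref{ass1}) is what ensures $H^-_{\bx_1, j}$ is actually a half-space and not all of $\R^n$.
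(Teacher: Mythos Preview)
Your proof is correct. The forward direction matches the paper's argument (yours is phrased directly via the one-sided derivative, the paper's by contraposition, but the content is identical).

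For the reverse direction the paper takes a shorter route: it simply observes that condition~(\ref{lemma-1}) supplies a supporting hyperplane to $\K$ at every boundary point, notes that $\K$ is closed with nonempty interior (from Slater), and then invokes a classical convex-geometry result (Schneider, \emph{Convex Bodies: The Brunn--Minkowski Theory}, Theorem~1.3.3) stating that any closed set with nonempty interior admitting a supporting hyperplane at each boundary point is convex. Your argument is, in effect, a self-contained proof of that theorem specialized to this setting: you build the candidate convex set $\K'$ as the intersection of the supporting half-spaces and then use the Slater point together with an exit-time argument along the segment $[\bx_0,\z]$ to rule out $\K'\setminus\K\neq\emptyset$. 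The paper's route is quicker but outsources the geometric core to a reference; yours is longer but elementary and makes explicit exactly where Slater's condition (interiority of $\bx_0$, hence the \emph{strict} inequality $\langle\nabla g_j(\bx_1),\bx_0-\bx_1\rangle<0$) and Assumption~\ref{ass1} (so that $H^-_{\bx_1,j}$ is a genuine half-space) each enter.
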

\begin{proof}
{\em Only if} part.  Assume that $\K$ is convex and 
$\langle \nabla g_j(\bx),\y-\bx\rangle >0$ 
for some $j\in\{1,\ldots,m\}$ and some $\bx,\y\in\K$ with $g_j(\bx)=0$.
Then $g_j(\bx+t(\y-\bx))>0$ for all sufficiently small $t$, in contradiction with
$\bx+t(\y-\bx)\in\K$ for all $0\leq t\leq 1$
(by convexity of $\K$).

{\em If} part.  By (\ref{lemma-1}), at every point $\bx$ on the boundary of $\K$,
there exists a supporting hyperplane for $\K$. As $\K$ is closed with nonempty interior, by \cite{schneider}[Th. 1.3.3] the set $\K$ is convex\footnote{The author wishes to thank Prof. L. Tuncel 
for providing him with the reference \cite{schneider}.}.
\end{proof}

\begin{thm}
\label{main}
Consider the nonlinear programming problem (\ref{defpb}) and let Assumption
\ref{ass1}  and Slater's condition both hold. If $f$ is convex then
every minimizer is a KKT point and conversely, every KKT point is a minimizer.
\end{thm}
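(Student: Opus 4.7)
The plan is to prove the two implications separately. For sufficiency (KKT $\Rightarrow$ minimizer), let $(\bx^*,\lambda)$ be a KKT pair and $\y\in\K$ arbitrary. Lemma \ref{lemma1} gives $\langle\nabla g_j(\bx^*),\y-\bx^*\rangle\le 0$ whenever $g_j(\bx^*)=0$, and complementary slackness kills the remaining terms. Hence
\[
\langle\nabla f(\bx^*),\y-\bx^*\rangle \,=\, -\sum_{j=1}^m \lambda_j\langle\nabla g_j(\bx^*),\y-\bx^*\rangle \,\ge\,0,
\]
and convexity of $f$ yields $f(\y)\ge f(\bx^*)$.

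For necessity, I would first verify the Mangasarian--Fromovitz constraint qualification (MFCQ) at any minimizer $\bx^*$, and then conclude by a standard Farkas argument. Let $J=\{j:g_j(\bx^*)=0\}$ and let $\bx_0$ be a Slater point; the claim is that $d_0:=\bx_0-\bx^*$ satisfies $\langle\nabla g_j(\bx^*),d_0\rangle<0$ for all $j\in J$. Lemma \ref{lemma1} already gives the non-strict version; to upgrade, observe that Slater's condition and continuity of the $g_j$'s place $\bx_0$ in the topological interior of $\K$, so for each $j\in J$ there exists $\delta>0$ with $\bx_0+\delta\nabla g_j(\bx^*)\in\K$---this is precisely where Assumption \ref{ass1} earns its keep, since a zero gradient would kill the perturbation. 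Applying Lemma \ref{lemma1} to this perturbed point gives $\langle\nabla g_j(\bx^*),d_0\rangle\le-\delta\|\nabla g_j(\bx^*)\|^2<0$, which establishes MFCQ.

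Given MFCQ, the KKT conditions follow by a textbook Farkas/Motzkin argument. If $-\nabla f(\bx^*)$ failed to lie in $\mathrm{cone}\{\nabla g_j(\bx^*):j\in J\}$, Farkas' lemma would produce $\bar d$ with $\langle\nabla g_j(\bx^*),\bar d\rangle\le 0$ for $j\in J$ and $\langle\nabla f(\bx^*),\bar d\rangle<0$. Perturbing to $\tilde d=\bar d+\epsilon d_0$ for sufficiently small $\epsilon>0$ makes both the active-constraint inner products and $\langle\nabla f(\bx^*),\tilde d\rangle$ strictly negative, so a first-order expansion shows that $\bx^*+t\tilde d$ is feasible (inactive constraints by continuity, active constraints by the strict linear decrease) and strictly decreases $f$ for small $t>0$, contradicting optimality. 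The only nonroutine step---and the main obstacle---is the MFCQ verification, where Assumption \ref{ass1} and Slater's condition are combined through Lemma \ref{lemma1} to convert the weak inequality into a strict one; once MFCQ is in hand, the rest is standard nonlinear programming.
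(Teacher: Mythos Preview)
Your proof is correct. The sufficiency direction (KKT $\Rightarrow$ minimizer) matches the paper's argument essentially line for line: Lemma \ref{lemma1} plus complementary slackness give $\langle\nabla f(\bx^*),\y-\bx^*\rangle\ge 0$, and convexity of $f$ finishes.

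For necessity (minimizer $\Rightarrow$ KKT) you take a genuinely different route from the paper. The paper invokes the Fritz--John conditions and then argues by contradiction that the objective multiplier $\lambda_0$ cannot vanish: if $\lambda_0=0$ then $\sum_{j\in J}\lambda_j\nabla g_j(\bx^*)=0$, and pairing this against $\z-\bx^*$ for every $\z$ in a ball around the Slater point $\bx_0$ (where each summand is nonpositive by Lemma \ref{lemma1}) forces $\nabla g_j(\bx^*)=0$ for every $j\in J$, contradicting Assumption \ref{ass1}. You instead establish MFCQ directly by the nice perturbation trick $\y=\bx_0+\delta\nabla g_j(\bx^*)$, which turns the weak inequality of Lemma \ref{lemma1} into the strict one $\langle\nabla g_j(\bx^*),\bx_0-\bx^*\rangle<0$, and then run a Farkas/Motzkin argument from scratch. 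Both approaches exploit the same geometric fact---that the Slater point lies in the interior of $\K$---and both hinge on Assumption \ref{ass1} at exactly the moment one needs a nonzero gradient. The paper's route is shorter because Fritz--John packages the Farkas step; yours has the merit of exhibiting MFCQ explicitly, which is a slightly stronger conclusion than mere existence of KKT multipliers and may be of independent interest.
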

\begin{proof}
Let $\bx^*\in\K$ be a minimizer (hence a global minimizer) with $f^*=f(\bx^*)$. We first prove that $\bx^*$ is a KKT point. The Fritz-John optimality conditions state that
\[\lambda_0\nabla f(\bx^*)+\sum_{j=1}^m\lambda_j\,\nabla g_j(\bx^*)\,=\,0;\quad \lambda_jg_j(\bx^*)=0,\:j=1,\ldots,m,\]
for some non trivial nonnegative vector $0\neq\lambda\in\R^{m+1}$. See e.g. Hiriart-Urruty \cite[Th. page 77]{jbhu} or Polyak \cite[Theor. 1, p. 271]{polyak}. We next prove that $\lambda_0\neq0$. Suppose that $\lambda_0=0$ and let
$J:=\{j\in\{1,\ldots,m\}:\lambda_j >0\}$. As $\lambda\neq0$ and $\lambda_0=0$, the set $J$ is nonempty. Next, as $g_j(\bx_0)<0$ for every $j=1,\ldots,m$, there is some $\rho >0$ such that $B(\bx_0,\rho):=\{\z\in\R^n:\Vert \z-\bx_0\Vert<\rho\}\subset\K$ and $g_j(\z)<0$ for all $\z\in B(\bx_0,\rho)$ and all $j\in J$.
Therefore we obtain
\[\sum_{j\in J}\lambda_j\,\langle \nabla g_j(\bx^*),\z-\bx^*\rangle\,=\,0\,\quad\forall \,\z\in B(\bx_0,\rho),\]
which, by Lemma \ref{lemma1}, implies that $\langle \nabla g_j(\bx^*),\z-\bx^*\rangle=0$ 
for every $j\in J$ and every $\z\in B(\bx_0,\rho)$. But this clearly implies that $\nabla g_j(\bx^*)=0$ for every $j\in J$, in contradiction with Assumption \ref{ass1}.
Hence $\lambda_0>0$ and we may and will set $\lambda_0=1$, so that the KKT conditions hold at 
$\bx^*$. 

Conversely, let $\bx\in\K$ be an arbitrary KKT point, i.e., $\bx\in\K$ satisfies
\[\nabla f(\bx)+\sum_{j=1}^m\lambda_j\,\nabla g_j(\bx)\,=\,0;\quad \lambda_jg_j(\bx)=0,\:j=1,\ldots,m,\]
for some nonnegative vector $\lambda\in\R^{m}$. Suppose that there exists 
$\y\in\K$ with $f(\y)<f(\bx)$. Then we obtain the contradiction:
\begin{eqnarray*}
0&>&f(\y)-f(\bx)\\
&\geq&\langle \nabla f(\bx),\y-\bx\rangle
\quad\mbox{[by convexity of $f$]}\\
&=&-\sum_{j=1}^m\lambda_j \langle
\nabla g_j(\bx),\y-\bx\rangle\geq 0
\end{eqnarray*}
where the last inequality follows from $\lambda\geq0$ and Lemma \ref{lemma1}. 
Hence $\bx$ is a minimizer.
\end{proof}
Hence  if $\K$ is convex and both Assumption \ref{ass1} and Slater's condition hold,
there is a one-to-one correspondence between KKT points and minimizers. That is,
the KKT optimality conditions are necessary and sufficient for all representations of $\K$ that
satisfy Slater's condition and Assumption \ref{ass1}.

However there is an important additional property when all the defining functions
$g_j$ are convex. 
Dual methods of the type
\[\sup_{\lambda\in\R^m_+}\,\left\{\inf_{\bx} f(\bx)+\sum_{j=1}^m\lambda_jg_j(\bx)\,\right\},\]
are well defined because $\bx\mapsto f(\bx)+\sum_{j=1}^m\lambda_jg_j(\bx)$ is a convex function. In particular,
the Lagrangian $\bx\mapsto L_f(\bx):=f(\bx)-f^*+\sum_{j=1}^m \lambda_j g_j(\bx)$,
defined from an arbitrary KKT point $(\bx^*,\lambda)\in\K\times\R^m_+$, is convex and nonnegative on $\R^n$, with $\bx^*$ being a global minimizer. 
If the $g_j$'s are not convex this is not true in general.

\begin{ex}
{\rm 
Let $n=2$ and consider the problem 
\[\P: \quad f^*=\min\,\{\,f(\bx)\::\:a-x_1x_2\leq0;\: \A\bx\leq \b;\,\bx\geq 0\:\},\]
where $a>0$, $\A\in\R^{m\times n}$, $\b\in\R^m$, and $f$ is convex and differentiable.
The set 
\[\K\,:=\,\{\bx\in\R^2\::\: a -x_1x_2\leq 0;\:\A\bx\leq\b;\:\bx\geq 0\:\}\]
is convex and it is straightforward to check that Assumption \ref{ass1} holds.
Therefore, by Theorem \ref{main}, if Slater's condition holds, every KKT point is a global minimizer. However, the Lagrangian
\[\bx\mapsto f(\bx)-f^*+\psi (a-x_1x_2)+\langle\lambda,\A\bx-\b\rangle-
\langle\mu,\bx\rangle,\]
with nonnegative $(\psi,\lambda,\mu)\in\R\times\R^m\times\R^n$, may not be convex whenever 
$\psi\neq0$ (for instance if $f$ is linear). On the other hand, notice that $\K$ has the equivalent
convex representation
\[\K\,:=\,\left\{\bx\in\R^2\::\: \left[\begin{array}{cc}x_1 & \sqrt{a}\\
\sqrt{a}&x_2\end{array}\right]\succeq0;\:\A\bx\leq\b\:\right\},\]
where for a real symmetric matrix $\B$, the notation
$\B\succeq0$ stands for $\B$ is positive semidefinite.
}
\end{ex}
A topic of further investigation is concerned with computational efficiency.
Can {\it efficient} algorithms be devised for some class of convex problems (\ref{defpb}) where
the defining functions $g_j$ of $\K$ are not necessarily convex?

\end{document}